\newbox\mybox
\def\overtag#1#2#3{\setbox\mybox\hbox{$#1$}\hbox to
  0pt{\vbox to 0pt{\vglue-#3\vglue-\ht\mybox\hbox to \wd\mybox
      {\hss$\ss#2$\hss}\vss}\hss}\box\mybox}
\def\undertag#1#2#3{\setbox\mybox\hbox{$#1$}\hbox to 0pt{\vbox to
    0pt{\vglue#3\vglue\ht\mybox\hbox to \wd\mybox
      {\hss$\ss#2$\hss}\vss}\hss}\box\mybox}
\def\lefttag#1#2#3{\hbox to 0pt{\vbox to 0pt{\vglue -6pt\hbox to
      0pt{\hss$\ss#2$\hskip#3}\vss}}#1}
\def\righttag#1#2#3{\hbox to 0pt{\vbox to 0pt{\vglue -6pt\hbox to
      0pt{\hskip#3$\ss#2$\hss}\vss}}#1}
\let\ss\scriptstyle
\def\splicediag#1#2{\xymatrix@R=#1pt@C=#2pt@M=0pt@W=0pt@H=0pt}
\def\Dot{\lower.2pc\hbox to 2pt{\hss$\bullet$\hss}}
\def\Circ{\lower.2pc\hbox to 2pt{\hss$\circ$\hss}}
\def\Vdots{\raise5pt\hbox{$\vdots$}}
\newcommand\lineto{\ar@{-}}
\newcommand\dashto{\ar@{--}}
\newcommand\dotto{\ar@{.}}
\renewcommand{\setminus}{\smallsetminus}
\newcommand\Q{{\mathbb Q}}
\newcommand\C{{\mathbb C}}
\newtheorem{theorem}{Theorem}[section]
\newtheorem{proposition}[theorem]{Proposition}
\newtheorem*{theorem*}{Theorem}
\newtheorem{corollary}[theorem]{Corollary}
\newtheorem{lemma}[theorem]{Lemma}
\theoremstyle{definition}
\newtheorem*{remark*}{Remark}
\renewcommand{\int}{\operatorname{int}}
\newcommand{\lcm}{\operatorname{lcm}}
\begin{document}
\title[ Lipschitz geometry of complex curves]{ Lipschitz geometry of complex curves}
\author{Walter D Neumann}\address{Department of Mathematics, Barnard
  College, Columbia University, 2009 Broadway MC4424, New York, NY
  10027, USA} \email{neumann@math.columbia.edu} \author{Anne Pichon}
\address{Aix-Marseille Univ, IML\\ FRE 3529 CNRS,
   Campus de Luminy - Case 907\\ 13288
  Marseille Cedex 9, France} \email{anne.pichon@univ-amu.fr}
\subjclass{14B05, 32S25, 32S05, 57M99} \keywords{bilipschitz,
  Lipschitz geometry, complex curve singularity, embedded topological type}
\begin{abstract}  We describe the Lipschitz geometry of complex
  curves. For the most part this is well known material, but we give a
  stronger version even of known results. In particular, we give a
  quick proof, without any analytic restrictions, that the outer
  Lipschitz geometry of a germ of a complex plane curve determines and
  is determined by its embedded topology. This was first proved by
  Pham and Teissier, but in an analytic category.
 \end{abstract}
 \maketitle
 
\section{Introduction} 

The germ of a complex set $(X,0)\subset (\C^N,0)$ has two metrics
induced from the standard hermitian metric on $\C^N$: the \emph{outer
  metric} given by distance in $\C^N$ and the \emph{inner metric}
given by arc-length of curves on $X$. Both are well defined up to
bilipschitz equivalence, {\it i.e.}, they only depend on the analytic
type of the germ $(X,0)$ and not on the embedding $(X,0)\subset
(\C^N,0)$. Studies of what information can be extracted from this
metric structure have generally worked under analytic restrictions,
e.g., that equivalences be restricted to be analytic or semi-algebraic
or similar. In this note we prove the metric classification of germs
of complex plane curves, but without any analytic restrictions
(equivalence of item \eqref{it1} of the following theorem with the
other items):

\begin{theorem} \label{th:main} Let $(C_1,0)\subset (\C^2,0)$ and
  $(C_2,0)\subset (\C^2,0)$ be two germs of complex curves. The
  following are equivalent:
   \begin{enumerate}
  \item\label{it1} $(C_1,0)$ and $(C_2,0)$ have same Lipschitz
    geometry, {\it i.e.}, there is a homeo\-morphism of germs $\phi\colon
    (C_1,0)\to (C_2,0)$ which is bilipschitz for the outer metric;
  \item\label{it2} there is a homeomorphism of germs
    $\phi\colon (C_1,0)\to (C_2,0)$, holomorphic
    except at $0$,  which is bilipschitz for the outer
    metric;
  \item\label{it3} $(C_1,0)$ and $(C_2,0)$ have the same embedded
    topology, {\it i.e.}, there is a homeo\-morphism of germs $h \colon
    (\C^2,0) \to (\C^2,0)$ such that $h(C_1)=C_2$;
  \item\label{it4} there is a bilipschitz homeomorphism
    of germs $h\colon (\C^2,0) \to (\C^2,0)$ with $h(C_1)=C_2$.
  \end{enumerate}
\end{theorem}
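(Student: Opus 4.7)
The implications $(2)\Rightarrow(1)$, $(4)\Rightarrow(1)$, and $(4)\Rightarrow(3)$ are immediate from the definitions, so the substance is to prove $(1)\Rightarrow(3)$, $(3)\Rightarrow(2)$, and $(3)\Rightarrow(4)$; this closes the circle.

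For $(3)\Rightarrow(2)$, I would invoke Zariski's theorem that the embedded topological type of a plane curve is determined by the characteristic Puiseux pairs of its irreducible branches together with their pairwise contact exponents. A topological equivalence gives a bijection of branches preserving all these numerical invariants; between corresponding branches one defines a map by matching Puiseux parametrizations $t\mapsto(t^n,y(t))$, which can be arranged to be holomorphic away from the origin. Each such branch map is manifestly inner bilipschitz, and a direct estimate of $\|p-q\|$ for $p,q$ on (possibly different) branches, in terms of $|p|$, $|q|$, and the relevant contact exponent, shows that the combined map is in fact outer bilipschitz. For $(3)\Rightarrow(4)$, the plan is to extend this branch-by-branch map to all of $\C^2$ by means of a carrousel decomposition of a small ball around $0$ adapted to $C$: this cuts the ball into finitely many semi-algebraic pieces, each a $D^2$-bundle over a Puiseux arc, whose combinatorial structure depends only on the Puiseux/contact data. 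The pieces on the two sides therefore match, and the map extends piecewise on disc fibres.

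The main content is $(1)\Rightarrow(3)$. By Zariski, it suffices to recover, intrinsically from outer distances, (a)~the number of branches, (b)~the characteristic Puiseux pairs of each branch, and (c)~the contact exponent between each pair of branches. Item (a) follows from the local connectedness of the branch links. For (c), the minimum outer distance from a point of $C^{(j)}\cap S_r$ to $C^{(k)}$ is asymptotic to $r^{q}$, where $q$ is the contact exponent of the two branches; this asymptotic exponent is visibly preserved by any outer bilipschitz homeomorphism. For (b) I would exploit the monodromy action of the $n$-th roots of unity on a Puiseux parametrization $t\mapsto(t^n,y(t))$ of a single branch: for a suitable root of unity $\omega$, the two points $(t^n,y(t))$ and $(t^n,y(\omega t))$ lie on the branch at equal heights and their outer distance is asymptotic to $r^{p_i/q_i}$ for some characteristic exponent $p_i/q_i$. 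The set $\{p_i/q_i\}$ then arises as the set of asymptotic decay rates of $\|p-q\|/|p|$ over suitable families of pairs on the branch at common height, and hence is an outer-metric invariant.

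The principal obstacle lies in making step (b) rigorous: a general bilipschitz homeomorphism is far from holomorphic, so it does not a priori send Puiseux-style pairs to Puiseux-style pairs. One must therefore characterize such pairs purely in metric terms, namely as pairs at common height that are ``close but not too close'' in a precise quantitative sense preserved under bilipschitz equivalence. I expect the key technical input to be a careful carrousel-type analysis showing that the exponents $p_i/q_i$ are exactly those that can appear as asymptotic decay rates of $\|p-q\|/|p|$ for pairs of points on the branch, thereby identifying the Puiseux data with an intrinsically metric invariant.
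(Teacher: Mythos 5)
Your outline of $(3)\Rightarrow(2)$ and $(3)\Rightarrow(4)$ follows essentially the same route as the paper (Puiseux parametrizations matched branch by branch, then a carrousel decomposition of the ambient ball), though you gloss over one real step in $(3)\Rightarrow(2)$: the ``direct estimate'' only works cleanly for pairs of points with the same $x$-coordinate, and reducing an arbitrary pair to such a pair requires a lemma (the paper's Lemma \ref{le:curve}) lifting the segment $[p(u),p(u')]$ through the generic projection and controlling the length of the lift. That reduction is not difficult, but it is not automatic and should be stated.

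The genuine gap is in $(1)\Rightarrow(3)$, and you have correctly located it yourself without closing it. Your recovery of the characteristic exponents uses pairs $(t^n,y(t))$, $(t^n,y(\omega t))$ ``at common height,'' but neither the Puiseux parametrization nor the notion of common height survives a bilipschitz homeomorphism, so the families of pairs over which you take asymptotic decay rates are not metrically defined; saying the pairs should be ``close but not too close'' does not yet produce an invariant. The paper closes exactly this gap with a concrete device. First, it works not with the set of exponents but with the full function $q\colon[\mu]\times[\mu]\to\Q\cup\{\infty\}$ recording the contact orders of all $\mu$ local sheets cut out by $x=t$; the ultrametric-type identity $q(j,l)=\min(q(j,k),q(k,l))$ turns $q$ into a filtered family of equivalence relations whose nested classes build the carrousel/Eggers tree, so the single function $q$ encodes branches, characteristic exponents and contact exponents at once (your items (a), (b), (c) separately are not obviously enough without this clustering structure). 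Second, to make $q$ metric rather than analytic, the transverse slice is replaced by the outer-metric ball $B_C(p'_1(t),rt)$ around \emph{any} continuous arc $p'_1(t)$ with $d(0,p'_1(t))=O(t)$: this ball decomposes into $\mu$ components whose pairwise distances are $O(t^{q(j,k)})$. Third, to survive a $K$-bilipschitz change of metric --- under which these components may disintegrate --- one intersects with a second ball of radius $rt/K^4$ and retains only the $\mu$ components of the larger ball meeting the smaller one. Without some version of this three-step argument (or an equivalent), your step (b) remains a conjecture rather than a proof, and the implication $(1)\Rightarrow(3)$ is not established.
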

The equivalences of \eqref{it1} and \eqref{it4} with \eqref{it3} are
our new contributions. The equivalence of \eqref{it2} and \eqref{it3}
was first proved by Pham and Teissier \cite{pham-teissier}. By
Teissier \cite[Remarque, p.354]{T3} (see also Fernandes
\cite{fernandes}) it then also follows that the outer bilipschitz
geometry of any curve germ $(X,0)\subset (\C^N,0)$ determines the
embedded topology of its general plane projection (Corollary
\ref{cor:main}).

For completeness we give quick proofs of all the equivalences. We
start with the result for inner geometry, which will be
used in examining outer geometry.

\subsection*{Acknowledgments} We thank Bernard Teissier for helpful
comments on the first version of this paper. This research was supported
by NSF grant DMS12066760 and by  the  ANR-12-JS01-0002-01 SUSI.
\section{Inner geometry}\label{sec:inner}

An algebraic germ $(X,0)\subset (\C^N,0)$ is homeomorphic to the cone
on its link $X\cap S_\epsilon$, where $S_\epsilon$ is the sphere of
radius $\epsilon$ about the origin with $\epsilon$ sufficiently small.
If it is endowed with a metric, it is \emph{metrically conical} if it
is bilipschitz equivalent to the metric cone on its link. This
basically means that the metric tells one no more than the topology
(and is therefore uninteresting).

\begin{proposition} \label{prop:inner} Any space curve germ $(C,0)
  \subset (\C^N,0)$ is metrically conical for the inner geometry.
\end{proposition}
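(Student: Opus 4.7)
The plan is to reduce to one branch, parametrize it by its normalization, and compare the pulled-back inner metric to an explicit flat-cone model.

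First, decompose $(C,0)=\bigcup_{i=1}^{r}(C_i,0)$ into irreducible branches. Distinct branches meet only at $0$, so any rectifiable arc in $C$ joining $C_i$ to $C_j$ ($i\ne j$) must pass through $0$, and the inner distance between two such points equals the sum of their inner distances to $0$. The same additivity holds on the metric cone over the link $\bigsqcup_i L_i$, $L_i:=C_i\cap S_\epsilon$ (a wedge of cones). Hence it suffices to prove the statement for a single irreducible branch.

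Fix such a branch $(C_i,0)$ of multiplicity $n$. Its normalization supplies a holomorphic injection $\gamma\colon(D,0)\to(C_i,0)$ with Taylor expansion $\gamma(t)=t^n v+O(t^{n+1})$, $v\in\C^N\setminus\{0\}$. The pullback of the inner metric to $D$ equals $|\gamma'(t)|^2|dt|^2$ with
\[
|\gamma'(t)|^2=n^2|v|^2|t|^{2(n-1)}\bigl(1+O(|t|)\bigr).
\]
I would introduce the model metric $g_0:=n^2|v|^2|t|^{2(n-1)}|dt|^2$, which is precisely the pullback under $\sigma\colon t\mapsto vt^n$ of the flat Hermitian metric on the complex line $\C v\subset\C^N$. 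Since $\sigma$ realizes $(D,g_0)$ as the $n$-fold branched Euclidean cover of a disk in $\C v$, the space $(D,g_0)$ is isometric to the flat cone of cone-angle $2\pi n$ and slant radius $|v|r^n$. A direct arc-length computation along $\theta\mapsto\gamma(re^{i\theta})$ shows $C_i\cap S_{|v|r^n}$ has inner length $2\pi n|v|r^n(1+O(r))$, the boundary circumference of this flat cone, so the model is isometric (up to the scale) to the metric cone on $L_i$.

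It then remains to compare the true pulled-back metric to $g_0$. Their ratio is $1+O(|t|)$, so after shrinking $D$ they are pointwise within a factor $2$; being Riemannian length metrics, this pointwise comparison transfers to a bilipschitz equivalence of the induced distance functions, including the apex $0$. The only step warranting real care is this last transfer from a pointwise comparison of tangent norms on the punctured disk to a bilipschitz comparison of distances on the full singular space, but it is routine since both distances are infima of $\int|\dot\alpha|$ over paths and the pointwise bound applies inside every such integral. I do not expect any substantive obstacle: the main work is bookkeeping of constants and scales across branches.
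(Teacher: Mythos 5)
Your proof is correct, but it takes a different route from the paper's, which is a three-line argument: pick a linear projection $p\colon\C^N\to\C$ whose kernel contains no tangent line of $C$ at $0$; then $p|_C$ is a cover branched only at $0$ which is bilipschitz for the inner metric (its derivative on the tangent planes of $C$ is uniformly pinched near $0$ precisely because those planes converge to lines transverse to $\ker p$), so $C$ carries the strictly conical metric lifted from $\C$ and is metrically conical. You instead split into branches, parametrize each branch by its normalization, and compare the pulled-back Riemannian metric with the explicit flat-cone model $|d(vt^n)|^2$. The two arguments exploit the same first-order geometry --- near $0$ each branch is, metrically, an $n$-fold branched cover of a disk in its tangent line --- but yours writes out the estimate that the paper compresses into the unproved assertion that $p|_C$ is inner-bilipschitz, at the cost of a separate (easy) reduction to the irreducible case via additivity of inner distance through the origin; the paper's single projection handles all branches at once and needs no Puiseux or normalization data. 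Your closing step (a pointwise comparison of tangent norms on the punctured disk passes to the induced length metrics, apex included) is indeed routine and is correctly flagged as the only point needing care; the minor imprecision that $\gamma(re^{i\theta})$ lies only approximately on the sphere of radius $|v|r^n$ is harmless since metric conicality is a bilipschitz notion.
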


\begin{proof}
  Take a linear projection $p \colon \C^N \to \C$ which is generic for
  the curve $(C,0)$ ({\it i.e.}, its kernel contains no
  tangent line of $C$ at $0$). Its restriction $p |_C \colon C \to \C$
  is a branched cover
  branched at $0$ which is bilipschitz for the inner geometry.  So $C$ is 
  metrically conical since it is
  strictly conical  with the metric lifted from $\C$.
\end{proof}

\section{Outer geometry determines embedded topological
  type}\label{sec:outer implies embedded}
 
In this section, we prove \eqref{it1} $\Rightarrow$ \eqref{it3} of Theorem
\ref{th:main}, {\it i.e.}, that the embedded topological type of a
plane curve germ $(C,0) \subset (\C^2,0)$ is determined by the outer
Lipschitz geometry of $(C,0)$.
  
We first prove this using the analytic structure and the outer metric
on $(C,0)$. The proof is close to Fernandes' approach in
\cite{fernandes}.  We then modify the proof to
make it purely topological and to allow a bilipschitz change of the
metric.

The tangent space to $C$ at $0$ is a union of lines $L^{(j)}$,
$j=1,\dots,m$, and by choosing our coordinates we can assume they are
all transverse to the $y$-axis.

There is $\epsilon_0 >0$ such that for any $\epsilon\le \epsilon_0$ the
curve $C$  meets transversely the set
$$T_\epsilon:=\{(x,y)\in \C^2: |x|= \epsilon\}\,.$$

Let $\mu$ be the multiplicity of $C$. The lines $x=t$ for $t\in
(0,\epsilon_0]$ intersect $C$ in $\mu$ points
$p_1(t),\dots,p_{\mu}(t)$ which depend continuously on $t$.  Denote by $[\mu]$ the set
$\{1,2,\dots,\mu\}$. For each
$  j, k \in [\mu]$ with $j < k$, the distance $d(p_j(t),p_k(t))$ has the form
$O(t^{q(j,k)})$, where $q(j,k) = q(k,j)$ is either a characteristic Puiseux
exponent for a branch of the plane curve $C$ or a coincidence exponent
between two branches of $C$ in the sense of e.g., \cite{LMW}. We call
such exponents \emph{essential}.  For $j\in [\mu]$ define $q(j,j)=\infty$.

\begin{lemma} \label{le:curve geometry}The map $q\colon
  [\mu]\times[\mu]\to \Q\cup\{\infty\}$, $(j,k)\mapsto q(j,k)$,
  determines the embedded topology of $C$.
\end{lemma}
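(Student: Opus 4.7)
The plan is to show that $q$ encodes a standard combinatorial invariant of the plane curve germ (essentially an Eggers--Wall or Kuo--Lu tree) from which the embedded topology is classically recoverable. Applying the triangle inequality $|p_j(t)-p_k(t)|\le|p_j(t)-p_l(t)|+|p_l(t)-p_k(t)|$ and reading off leading orders in $t$, one first checks that $q$ satisfies the ultrametric inequality
\[
q(j,k)\ge\min\bigl(q(j,l),q(l,k)\bigr),\qquad j,k,l\in[\mu].
\]
Ultrametric data on a finite set is equivalent to a rooted labeled tree $T$ whose leaves are $[\mu]$ and whose internal vertices carry rational labels strictly increasing along paths from the root to the leaves, with $q(j,k)$ the label of the lowest common ancestor of the leaves $j$ and $k$.

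Next I would identify the internal labels of $T$ with characteristic Puiseux exponents (within a single branch) and coincidence exponents (between pairs of branches). For an irreducible branch $B_i$ of multiplicity $n_i$ with characteristic exponents $1\le e_{i,1}<\dots<e_{i,g_i}$, the $n_i$ sheets over $\{x=t\}$ form a subset $S_i\subset[\mu]$, and the partition refinement of $S_i$ as the threshold $\lambda$ crosses each $e_{i,s}$ is governed by the Galois action of an $n_i$-th root of unity on the Puiseux parametrization; in particular the sizes and branching pattern of these sub-partitions determine $n_i$ and all $e_{i,s}$ from the restriction $q|_{S_i\times S_i}$. For distinct branches $B_i,B_{i'}$, the largest value of $q$ between an index of $S_i$ and an index of $S_{i'}$ is, by definition, the coincidence exponent $c(B_i,B_{i'})$.

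The remaining step, and the main obstacle, is to recover the partition $[\mu]=\bigsqcup_i S_i$ into branches purely from $q$, since coincidence exponents between different branches can interleave numerically with characteristic exponents of individual branches. My plan is to proceed inductively on the tree $T$: at each internal vertex $v$, one tests whether the leaves below $v$ have the arithmetic signature of the sheets of a single branch (a specific divisibility pattern of cluster sizes versus labels, as described above); the maximal such subtrees cut out the $S_i$. Once the $S_i$, the characteristic exponents of each $B_i$, and the pairwise coincidence exponents $c(B_i,B_{i'})$ are all recovered from $q$, the embedded topological type of $C$ follows from the classical Zariski--Brauner classification of plane curve singularities (compare \cite{LMW}), for which Puiseux pairs together with pairwise coincidence exponents form a complete invariant of embedded topology.
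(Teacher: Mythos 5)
Your opening steps (the ultrametric inequality for $q$ and the equivalence with a rooted tree $T$ whose internal vertices carry the values of $q$) coincide with the start of the paper's proof. The gap is in the step you yourself flag as the main obstacle: recovering the partition $[\mu]=\bigsqcup_i S_i$ into branches. Your plan is that ``the maximal subtrees with the arithmetic signature of a single branch cut out the $S_i$,'' but the sets $S_i$ need not be leaf sets of subtrees of $T$ at all. Take $C$ with the two branches $y=x^{3/2}$ and $y=x^{3/2}+x^2$, so $\mu=4$ with sheets $a_\pm$ (first branch) and $b_\pm$ (second branch, labelled by the common choice of square root). Then $q(a_+,a_-)=q(b_+,b_-)=q(a_\pm,b_\mp)=3/2$ while $q(a_+,b_+)=q(a_-,b_-)=2$: the coincidence exponent $2$ exceeds the characteristic exponent $3/2$, so the clusters of $T$ at the finest nontrivial level are $\{a_+,b_+\}$ and $\{a_-,b_-\}$, each mixing sheets of both branches, and neither $S_1=\{a_+,a_-\}$ nor $S_2=\{b_+,b_-\}$ sits below any vertex of $T$. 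Your signature test, applied to subtrees, would here reject every non-leaf subtree and output four smooth branches instead of two cusps. So the induction as described does not terminate in the correct answer, and the subsequent appeal to the Zariski classification (which is fine in itself) has nothing correct to feed on.

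The paper avoids this trap by never trying to isolate the branches. From the weighted tree it reads off, at each internal vertex $v$, the integer $n_v$ (the lcm of the denominators of the weights on the path from $v$ to the root) and $r_v=n_v/n_{v'}$, and uses the structural fact that the subtrees hanging below $v$ fall into groups of $r_v$ isomorphic copies (the orbits of the local monodromy) plus at most one extra subtree; collapsing each group to a single representative and decorating edges with the $r_v$ and the quantities $s_v=n_v(q_v-q_{v'})$ produces the Eggers tree and then the splice diagram directly. If you want to salvage your route, you would need to replace the ``maximal subtree'' criterion by exactly this kind of Galois/monodromy bookkeeping, at which point you have essentially reconstructed the paper's argument.
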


\begin{proof} There are many combinatorial objects that encode the
  embedded topology of $C$, for example the  Eisenbud-Neumann splice diagram
  \cite{EN} of the curve or the Eggers tree \cite{Eggers} (both are
  described, with the relationship between them, in
  C.T.C. Wall's book \cite{CTCWall}). The combinatorial
    carrousel, introduced in \cite{NP}, is closely related. All
  three are rooted trees with edges or vertices decorated
  with numeric labels.

  To prove the lemma we will construct the combinatorial carrousel
  from $q$. We also describe how one derives the splice diagram from
  it.

  The $q(j,k)$ have the property that $q(j,l) = min (q(j,k),q(k,l))$
  for any triple $j,k,l$ with $j\ne l$. So for any $q\in \Q\cup\{\infty\}, q>0$, the
  relation on the set $[\mu]$ given by $j\sim_q k\Leftrightarrow
  q(j,k)\ge q$ is an equivalence relation.

  Name the elements of the set $q([\mu]\times[\mu])\cup\{1\}$ in
  decreasing order of size: $\infty=q_0>q_1>q_2>\dots>q_s=1$.
  For each $i=0,\dots,s$ let $G_{i,1},\dots,G_{i,\mu_i}$ be the
  equivalence classes for the relation $\sim_{q_i}$. So $\mu_0=\mu$
  and the sets $G_{0,j}$ are singletons while $\mu_s=1$ and
  $G_{s,1}=[\mu]$. We form a tree with these equivalence classes $G_{i,j}$ as
  vertices, and edges given by inclusion relations:  the
  singleton sets $G_{0,j}$ are the leaves and there is an edge between
  $G_{i,j}$ and $G_{i+1,k}$ if $G_{i,j}\subseteq G_{i+1,k}$. The vertex
  $G_{s,1}$ is the root of this tree. We weight each vertex   with its corresponding $q_i$.

  The combinatorial carrousel is the tree obtained from this
 tree by suppressing valence $2$ vertices: we remove each such
  vertex and amalgamate its two adjacent edges into one edge. We will
  describe how one gets from this to the splice diagram, but we
  first give an illustrative example.

We will use the plane curve $C$ with two branches given by
$$y=x^{3/2}+x^{13/6},\quad y=x^{7/3}\,.
$$ 
Here are pictures of sections of $C$ with complex lines
 $x=0.1$, $0.05$, $0.025$ and $0$. { The central three-points set corresponds to the  branch $y=x^{7/3}$ while the two lateral  three-points sets correspond to the other branch. }

\begin{figure}[h]
  \centering
\vbox to 0 pt{\vglue12pt\hbox to 0 pt{$0.1$\hss}
\vglue18pt\hbox to 0 pt{$0.05$\hss}
\vglue5pt\hbox to 0 pt{$0.025$\hss}
\vglue3pt\hbox to 0 pt{$0$\hss}\vss}
\begin{tikzpicture}
\draw[fill=black] (3.1,0)+(0:.6)circle(.7pt);
\draw[fill=black] (3.1,0)+(120:.6)circle(.7pt);
\draw[fill=black] (3.1,0)+(240:.6)circle(.7pt);
\draw[fill=black] (-3.1,0)+(0:.6)circle(.7pt);
\draw[fill=black] (-3.1,0)+(120:.6)circle(.7pt);
\draw[fill=black] (-3.1,0)+(240:.6)circle(.7pt);
\draw[fill=black] (0,0)+(0:0.4)circle(.7pt);
\draw[fill=black] (0,0)+(120:0.4)circle(.7pt);
\draw[fill=black] (0,0)+(240:0.4)circle(.7pt);
\draw[fill=black] (1.1,-1)+(0:.15)circle(.4pt);
\draw[fill=black] (1.1,-1)+(120:.15)circle(.4pt);
\draw[fill=black] (1.1,-1)+(240:.15)circle(.4pt);
\draw[fill=black] (-1.1,-1)+(0:.15)circle(.4pt);
\draw[fill=black] (-1.1,-1)+(120:.15)circle(.4pt);
\draw[fill=black] (-1.1,-1)+(240:.15)circle(.4pt);
\draw[fill=black] (0,-1)+(0:0.1)circle(.4pt);
\draw[fill=black] (0,-1)+(120:0.1)circle(.4pt);
\draw[fill=black] (0,-1)+(240:0.1)circle(.4pt);
\draw[fill=black] (.4,-1.5)+(0:.034)circle(.2pt);
\draw[fill=black] (.4,-1.5)+(120:.034)circle(.2pt);
\draw[fill=black] (.4,-1.5)+(240:.034)circle(.2pt);
\draw[fill=black] (-.4,-1.5)+(0:.034)circle(.2pt);
\draw[fill=black] (-.4,-1.5)+(120:.034)circle(.2pt);
\draw[fill=black] (-.4,-1.5)+(240:.034)circle(.2pt);
\draw[fill=black] (0,-1.5)+(0:0.018)circle(.15pt);
\draw[fill=black] (0,-1.5)+(120:0.018)circle(.15pt);
\draw[fill=black] (0,-1.5)+(240:0.018)circle(.15pt);
\draw[fill=black] (0,-2)+(0:0)circle(.2pt);
\end{tikzpicture}
\end{figure}
The combinatorial carrousel for this example is the tree on the left
in the picture below and the procedure we will describe for getting from
it to the splice diagram is then illustrated
in the middle and right trees.  We will follow the computer science
convention of drawing the tree with its root vertex at the top,
descending to its leaves at the bottom.
$$
\splicediag{10}{3}{&&&&\hbox to 0 pt{\hss \Small{Combinatorial
      carrousel}\hss}\\
&&&&\righttag{\Circ}{1}{4pt}\lineto[dd]\\\\
&&&&\righttag{\Circ}{\frac32}{4pt}\lineto[ddddlll]\lineto[dd]\lineto[ddrrr]\\\\
&&&&\righttag{\Circ}{\frac{13}6}{3pt}\ar@{-}[ddddl]\ar@{-}[dddd]\ar@{-}[ddddr]&&&
\righttag{\Circ}{\frac{13}6}{4pt}\ar@{-}[ddddl]\ar@{-}[dddd]\ar@{-}[ddddr]\\\\
&\lefttag{\Circ}{\frac73}{3pt}\ar@{-}[ddl]\ar@{-}[dd]\ar@{-}[ddr]\\\\
\Circ&\Circ&\Circ& \Circ&\Circ&\Circ& \Circ&\Circ&\Circ& }\quad\quad\quad\quad\quad\quad
\splicediag{10}{3}{\\&&&&\righttag{\Circ}{1}{4pt}\lineto[dd]\\\\
&&&&\righttag{\Circ}{\frac32}{4pt}\lineto[ddddlll]_(.25){2}\lineto[ddrrr]\\\\
&&&&&&&\righttag{\Circ}{\frac{13}6}{4pt}\ar@{-}[dddd]\\\\
&\lefttag{\Circ}{\frac{14}6}{2pt}\ar@{-}[dd]&&&&&&&&&\\\\
&\Circ&&&&&&\Circ }
\quad\splicediag{9}{3}{\hbox to 0 pt{\hss\Small{Eggers tree}\hss}\\\\\\\\=}\quad
\splicediag{10}{3}{\\\\
&&&&\righttag{\Circ}{1}{4pt}\lineto[dd]\\\\
&&&&\righttag{\Circ}{\frac32}{4pt}\lineto[ddlll]_(.4){2}\lineto[ddrrr]\\\\
&\lefttag{\Circ}{\frac{14}6}{2pt}\ar@{-}[dd]&&&&&&
\righttag{\Circ}{\frac{13}6}{4pt}\ar@{-}[dd]\\\\
&\Circ&&&&&&\Circ }\quad\quad\quad\quad
\splicediag{10}{3}{
&&&&&&\hbox to 0 pt{\hss\Small{Splice diagram}\hss}\\\\
&&&&&&&\Circ\lineto[dd]^(.35){1}^(.8){3}\\\\
&&&&&&&\Circ\lineto[ddlll]_(.35){2}_(.8)7\lineto[ddrrr]^(.3)1^(.8){20}\\\\
&&&&\Circ\ar@{->}[dd]^(.3)1\ar@{-}[dlll]_(.35)3&&&&&&
\Circ\ar@{->}[dd]_(.3)1\ar@{-}[drrr]^(.4)3\\
&\Circ&&&&&&&&&&&&\Circ\\
&&&&&&&&&&&&\\
&&&&&&&}\quad\quad
$$

At any non-leaf vertex $v$ of the combinatorial carrousel
  we have a weight $q_v$, $1\le q_v\le q_1$, which is one of the
  $q_i$'s. We write it as $m_v/n_v$, where $n_v$ is the $\lcm$ of the
  denominators of the $q$-weights at the vertices on the path from $v$
  up to the root vertex. If $v'$ is the adjacent vertex above $v$
  along this path, we put $r_v=n_v/n_{v'}$ and $s_v=n_v(q_v-q_{v'})$.
  At each vertex $v$ the subtrees cut off below $v$ consist of groups
  of $r_v$ isomorphic trees, with possibly one additional tree.  We
  label the top of the edge connecting to this additional tree at $v$,
  if it exists, with the number $r_v$, and then delete all but one from
  each group of $r_v$ isomorphic trees below $v$. We do this for each
  non-leaf vertex of the combinatorial carrousel. The resulting tree,
  with the $q_v$ labels at vertices and the extra label on a downward
  edge at some vertices is easily recognized as a mild modification of
  the Eggers tree.

  We construct the splice diagram starting from this tree. We first
  replace every leaf by an arrowhead. Then at each vertex $v$ which
  did not have a downward edge with an $r_v$ label we add such an edge
  (ending in a new leaf which is not an arrowhead).  Each still
  unlabeled top end of an edge is then given the label $1$. Finally,
  starting from the top of the tree we move down the tree adding a
  label to the bottom end of each edge ending in a vertex $v$ which is
  not a leaf as follows. If $v$ is directly below the root the label
  is $m'_v:=m_v$. For a vertex $v$ directly below a vertex $v'$ other
  than the root the label is $m'_v:=s_v+r_vr_{v'}m'_{v'}$ if 
  $r_{v'}$ does not label the edge $v'v$ and
  $m'_v:=(s_v+r_vm'_{v'})/r_{v'}$ if it does (see \cite[Prop.\ 1A.1]{EN}).
\end{proof}

As already noted, this discovery of the embedded topology involved the
complex structure and outer metric. We must show we can discover
it without use of the complex structure, even after applying a
bilipschitz change to the outer metric.

Recall that the tangent space of $C$ is a union of lines $L^{(j)}$. We
denote by $C^{(j)}$ the part of $C$ tangent to the line $L^{(j)}$.  It
suffices to discover the topology of each $C^{(j)}$ independently,
since the $C^{(j)}$'s are distinguished by the fact that the distance
between any two of them outside a ball of radius $\epsilon$ around $0$
is $O(\epsilon)$, even after bilipschitz change to
the metric. We therefore assume from now on that the tangent to
$C$ is a single complex line.

The points $p_1(t),\dots,p_\mu(t)$ we used to find the numbers
$q(j,k)$ were obtained by intersecting $C$ with the line $x=t$. The arc
$p_1(t)$, $t\in [0,\epsilon_0]$ satisfies $d(0,p_1(t))=O(t)$. Moreover,
the other points $p_2(t),\dots,p_\mu(t)$ are in the transverse disk of 
radius $rt$ centered at $p_1(t)$ in the plane $x=t$. Here $r$ can be as
small as we like, so long as $\epsilon_0$ is then chosen sufficiently
small.

Instead of a transverse disk of radius $rt$, we can use a ball
$B(p_1(t),rt)$ of radius $rt$ centered at $p_1(t)$. This
$B(p_1(t),rt)$ intersects $C$ in $\mu$ disks $D_1(t),\dots,D_\mu(t)$,
and we have $d(D_j(t),D_k(t))=O(t^{q(j,k)})$, so we still recover the
numbers $q(j,k)$.  In fact, the ball in the outer metric on $C$ of
radius $rt$ around $p_1(t)$ is $B_C(p_1(t),rt):=C\cap B(p_1(t),rt)$,
which consists of these $\mu$ disks $D_1(t),\dots,D_\mu(t)$.

We now replace the arc $p_1(t)$ by any continuous arc $p'_1(t)$ on
$C$ with the property that $d(0,p'_1(t))=O(t)$, and if $r$ is
sufficiently small it is still true that $B_C(p'_1(t),rt)$ consists of
$\mu$ disks $D'_1(t),\dots,D'_\mu(t)$ with
$d\bigl(D'_j(t),D'_k(t)\bigr)=O(t^{q(j,k)})$.  So at this point, we have gotten
rid of the dependence on analytic structure in discovering the
topology, but not yet dependence on the outer geometry.

A $K$-bilipschitz change to the metric may make the components of
$B_C(p'_1(t),rt)$ disintegrate into many pieces, so we can no longer
simply use distance between pieces. To resolve this, we consider both
$B_C'(p'_1(t),rt)$ and $B_C'(p'_1(t),\frac r{K^4}t)$ where $B'$ means
we are using the modified metric. Then only $\mu$ components of
$B_C'(p_1(t),rt)$ will intersect $B_C'(p_1(t),\frac r{K^4}t)$.  Naming
these components $D'_1(t),\dots,D'_\mu(t)$ again, we still have
$d(D'_j(t),D'_k(t))=O(t^{q(j,k)})$ so the $q(j,k)$ are determined as
before.\qed

\section{Embedded topological type determines outer geometry}

In this section, we prove \eqref{it3} $\Rightarrow$ \eqref{it2} of
Theorem \ref{th:main}. The implication \eqref{it2} $\Rightarrow$
\eqref{it1} is trivial, so we then have the equivalence of the first
three items of Theorem \ref{th:main}.

We will use the  following lemma: 
\begin{lemma}\label{le:curve}
  Let $(C,0) \subset (\C^N,0)$ be a germ of complex plane curve and
  let $p \colon \C^N \to \C$ be a linear projection whose kernel does
  not contain any tangent line to $C$.  Then there exists a
  neighborhood $U$ of $0$ in $C$ and a constant $M>1$ such that for
  each $u,u' \in U\setminus\{0\}$, there is an arc $\tilde \alpha$ in
  $C$ joining $u$ to a point $u''$ with $p(u'')=p(u')$ and
$$ d(u,u') \leq L(\tilde \alpha) + d(u'',u') \leq M d(u,u')$$ 
where $L( \tilde \alpha)$ denotes the length of $\tilde \alpha$.
 \end{lemma}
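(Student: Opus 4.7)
The plan is to produce $\tilde\alpha$ by lifting a short path $\gamma$ from $p(u)$ to $p(u')$ in $\C\setminus\{0\}$ to a path on $C$ starting at $u$, then to control its length via Proposition \ref{prop:inner}, and finally to estimate $d(u'',u')$ by combining the triangle inequality with the obvious bound $d(u,u'')\le L(\tilde\alpha)$.

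First I would extract from the proof of Proposition \ref{prop:inner} the fact that, since the kernel of $p$ contains no tangent line of $C$ at $0$, the restriction $p|_C\colon C\setminus\{0\}\to\C\setminus\{0\}$ is a finite unramified covering which is locally $K$-bilipschitz (for the inner metric on $C$ and the Euclidean metric on $\C$), with a uniform constant $K$. Consequently, any $C^1$ path $\gamma$ in $\C\setminus\{0\}$ lifts (once a starting point over $\gamma(0)$ is chosen) to a path on $C\setminus\{0\}$ of length at most $K\,L(\gamma)$.

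Next I would shrink $U$ so that $p(U)$ sits inside a small disk about $0$. For $u,u'\in U\setminus\{0\}$, I would produce a path $\gamma$ in $\C\setminus\{0\}$ from $p(u)$ to $p(u')$ of length at most $C_0\,|p(u)-p(u')|$ for a universal constant $C_0$: take the straight segment when it misses $0$, and otherwise insert a short circular detour around $0$. Lifting $\gamma$ starting at $u$ gives the sought arc $\tilde\alpha$ on $C$, ending at some $u''$ with $p(u'')=p(u')$. Since $|p(u)-p(u')|\le\|p\|\,d(u,u')$ (where $\|p\|$ is the operator norm of the linear map $p$), this yields
$$L(\tilde\alpha)\le KC_0\|p\|\,d(u,u').$$
Combining $d(u,u'')\le L(\tilde\alpha)$ with the triangle inequality gives $d(u'',u')\le L(\tilde\alpha)+d(u,u')$, hence
$$L(\tilde\alpha)+d(u'',u')\le 2L(\tilde\alpha)+d(u,u')\le(2KC_0\|p\|+1)\,d(u,u'),$$
and one sets $M:=2KC_0\|p\|+1$. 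The other inequality $d(u,u')\le L(\tilde\alpha)+d(u'',u')$ is just $d(u,u')\le d(u,u'')+d(u'',u')$.

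The only delicate point is the planar-topology input that $\gamma$ can be kept in $\C\setminus\{0\}$ with length comparable to $|p(u)-p(u')|$; since the branch locus in $\C$ is the single point $\{0\}$, this reduces to the elementary observation that any two points of $\C\setminus\{0\}$ in a fixed small disk can be joined there by a path of length at most $\pi\,|p(u)-p(u')|$. Everything else follows cleanly from Proposition \ref{prop:inner}, so I expect no substantial obstacle beyond this planar estimate.
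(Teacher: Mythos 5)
Your proposal is correct and follows essentially the same route as the paper: lift the segment $[p(u),p(u')]$ (perturbed slightly to avoid $0$ when necessary) through the locally inner-bilipschitz covering $p|_C$ from Proposition \ref{prop:inner}, bound $L(\tilde\alpha)$ by the bilipschitz constant times $|p(u)-p(u')|\le \|p\|\,d(u,u')$, and control $d(u'',u')$ by the triangle inequality through $u$. The only cosmetic differences are your explicit operator-norm factor $\|p\|$ and the constant $C_0$ for the detour (the paper takes any $\delta>1$), yielding $M=2KC_0\|p\|+1$ versus the paper's $M=1+2\delta K_0$.
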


\begin{proof}
  There exists a neighbourhood $U$ of $0$ in $C$ such that the
  restriction $p|_{C}$ is a bilipschitz local homeomorphism for the
  inner metric on $U \setminus \{0\}$ (see proof of Proposition
  \ref{prop:inner}).  Choose any $\delta>1$. If $0$ is not in
  the segment $[p(u),p(u')]$, we set $\alpha = [p(u),p(u')]$. If $0
  \in [p(u),p(u')]$, we modify this segment to a curve $\alpha$
  avoiding $0$ which has length at most $\delta$ times the length of
  $[p(u),p(u')]$.  Consider the lifting $\tilde \alpha$ of $\alpha$ by
  $p|_{C}$ with origin $u$ and let $u''$ be its extremity.  We
  obviously have:
$$d(u,u')\leq L( \tilde\alpha) + d(u',u'')\,.$$
 
On the other hand, $L( \tilde\alpha) \leq K_0 L(\alpha)\leq \delta K_0
d(p(u),p(u'))$, where $K_0$ is a bound for the local inner
bilipschitz constant of $p$ on $U \setminus \{0\}$.  As $d(p(u),p(u'))
\leq d(u,u')$, we then obtain: $L( \tilde\alpha) \leq \delta K_0 
d(u,u')$

If we join the segment $[u,u']$ to
$ \tilde\alpha$ at $u$ we get a curve from $u'$ to
$u''$, so   $d(u',u'')  \leq (1+\delta K_0) d(u,u')$.  We then obtain: 
$$ L( \tilde\alpha) + d(u',u'')  \leq (1+2 \delta K_0 ) d(u,u'),$$
and $M=1+2 \delta K_0 $ is the desired constant. 
\end{proof}

\begin{proof}[Proof of \eqref{it3} $\Rightarrow$ \eqref{it2} of
  Theorem \ref{th:main}]

 Let $(C_1,0) \subset (\C^2,0)$ 
be an 
irreducible plane curve which is not tangent to the $y$-axis. Then
there exists a minimal integer $n >0$ such that $(C_1,0)$ has 
Puiseux parametrization
$$\gamma_1(w)=(w^n,\sum_{i \geq n} a_i w^i)\,.$$
Denote $A: = \{i : a_i \neq 0 \}$. 
Recall that the embedded topology of $C_1$ is
determined by $n$ and the essential integer exponents in the sum $\sum_{i \geq
  n} a_i w^i$, where an $i\in A\setminus\{n\}$ is an \emph{essential
  integer exponent} if
and only if
$ \gcd\{j\in \{n\}\cup A: j\le i\}<\gcd\{j\in \{n\}\cup
A:j<i\}$ (equivalently $\frac in$ is a characteristic exponent). Denote by $A_e$ 
the subset of $A$ 
consisting of the essential integer exponents.

Now let $(C_2,0)\subset (\C^2,0)$, given by 
$$\gamma_2(w)=(w^n,\sum_{i \geq n} b_i w^i)\,,$$
 be a second plane curve with the
same embedded topology as $C_1$, so that the set of essential integer
exponents $B_e\subset B:= \{i :b_i \neq 0 \}$ is equal to $A_e$. 

We will prove that the homeomorphism $\Phi \colon C_1 \to C_2$ defined
by $\Phi(\gamma_1(w))=\gamma_2(w)$ is bilipschitz on small
neighborhoods of the origin.

We first prove that there exists $K>0$ and a neighborhood $U$ of
$0$ in $\C$ such that for each pair $(w,w')$ with $w\in U$, $w\ne w'$ and
$w^n=(w')^n$, we have
$$ d\bigl(\gamma_1(w),\gamma_1(w')\bigr) 
\leq K d\bigl(\gamma_2(w),\gamma_2(w')\bigr)$$

For $(w,w')$ as above, consider the two real arcs $s \in [0,1] \mapsto
\gamma_1(sw)$ and $s \mapsto \gamma_1(sw')$ and their images by
$\Phi$. Then we have
$$d\bigl(\gamma_1(ws), \gamma_1(w's)\bigr) = s^n   
\bigg|    \sum_{i> n}a_i s^{i-n}\bigl(w^i - (w')^i\bigr)  \bigg|$$
and
$$d\Bigl(\Phi\bigl(\gamma_1(ws)\bigr),
\Phi\bigl(\gamma_1(w's)\bigr)\Bigr) = s^n   
\bigg| \sum_{i > n}b_j s^{i-n}\bigl(w^i - (w')^i\bigr)
\bigg| $$
 
Let $i_0$ be the minimal element of $\{i \in A ; w^i \neq
(w')^i\}$. Then $i_0$ is an essential integer exponent, so $a_{i_0}$ and
$b_{i_0}$ are non-zero.  Moreover, as $s$ tends to $0$ we have  $d\bigl(\gamma_1(ws),
\gamma_1(w's)\bigr) \sim s^{i_0} |w^{i_0}-(w')^{i_0}| |a_{i_0}|$ and $
d\bigl(\Phi\bigl(\gamma_1(ws)\bigr),
\Phi\bigl(\gamma_1(w's)\bigr)\bigr) \sim s^{i_0}
|w^{i_0}-(w')^{i_0}||b_{i_0}|$ and hence the ratio
$$ d\Bigl(\gamma_1(ws), \gamma_1(w's)\Bigr)\,\Big/\, 
d\Bigl(\Phi\bigl(\gamma_1(ws)\bigr),
\Phi\bigl(\gamma_1(w's)\bigr)\Bigr) 
\hbox to 0cm{\hglue1.5cm$(\ast)$\hss}
$$ 
tends to the non zero constant $c_{i_0}= \frac{|a_{i_0}|}{|b_{i_0}|}$.

Notice that the integer $i_0$ depends on the pair of points
$(w,w')$. But $i_0$ is  either $n$ or an  essential integer exponent
for $\gamma_1$. Therefore there are a finite number of values for $i_0$
and $c_{i_0}$.  Moreover, the set of pairs $(w,w')$ such that
$w^n=(w')^n$ consists of a disjoint union of $n$ lines. So there
exists $s_0 >0$ such that for each such $(w,w')$ with $|w| = 1$ and
each $s \leq s_0$, the quotient $(\ast)$ belongs to $[1/K,K]$ where
$K>0$. Then $U = \{w: |w| \leq s_0\}$ is the desired neighbourhood of
$0$.

We now prove that $\Phi$ is bilipschitz on $\gamma_1(U)$. Consider the
projection $p\colon \C^2 \to \C$ given by $p(x,y)=x$. Let $w$ and $w'$
be any two complex numbers in $U$. Let $\alpha $ be the segment in
$\C$ joining $w^n$ to $(w')^n$ and let $\tilde{\alpha}_1$ (resp.\
$\tilde{\alpha}_1$) be the lifting of $\alpha$ by the restriction
$p|_{C_1}$ (resp.\ $p|_{C_2}$) with origin $ \gamma_1(w)$ (resp.\
$\gamma_2(w)$). Consider the unique $w'' \in \C$ such that
$\gamma_1(w'')$ is the extremity of $\tilde{\alpha}_1$.  Notice that
$\gamma_2(w'')$ is the extremity of $\tilde{\alpha}_2$. We have
$$
d\bigl(\gamma_1(w),\gamma_1(w')\bigr) \leq L(\tilde \alpha_1) +
d\bigl(\gamma_1(w''),\gamma_1(w')\bigr).$$

According to Section \ref{sec:inner}, $p|_{C_1}$ (resp.\ $p|_{C_2}$)
is an inner bilipschitz homeomorphism with bilipschitz constant say
$K_1$ (resp.\ $K_2$). We then have $ L(\tilde \alpha_1) \leq K_1 K_2
L(\tilde \alpha_2)$. Therefore setting $C=\max(K_1K_2,K)$, we obtain:
$$d\bigl(\gamma_1(w),\gamma_1(w')\bigr) \leq C\Bigl( L(\tilde \alpha_2) +  
d\bigl(\gamma_2(w''),\gamma_2(w')\bigr)\Bigr) \hbox to
0pt{\hglue1.3cm$(\ast \ast)$\hss}$$

Applying Lemma \ref{le:curve} to the restriction $p|_{C_2}$ with
$u=\gamma_2(w)$ and $u'=\gamma_2(w')$, we then obtain:
$$
d\bigl(\gamma_1(w),\gamma_1(w')\bigr) \leq C M
d\bigl(\gamma_2(w),\gamma_2(w')\bigr)$$

This proves $\Phi$ is Lipschitz. It is then bilipschitz by symmetry of
the roles.

In the general case where $C_1$ and $C_2$ are not necessarily
irreducible, the same arguments work taking into account a Puiseux
parametrization for each branch and the fact that the sets of
characteristic exponents and coincidence exponents between branches
coincide.
\end{proof}

\section{Outer geometry of space curves} \label{sec:non plane}
 
Before proving the final equivalence of Theorem \ref{th:main} we give
a quick proof, based on the preceding proof, of the following
result of Teissier \cite[pp. 352--354]{T3}.

\begin{theorem} \label{th:proj}For a complex curve germ $(C,0)
  \subset (\C^N,0)$ the restriction to $C$ of a generic linear 
  projection $\ell \colon \C^N \to \C^2$ is bilipschitz for the
  outer geometry.
\end{theorem}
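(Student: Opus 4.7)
The projection $\ell$ is linear, so $d(\ell(u),\ell(u'))\leq\|\ell\|\cdot d(u,u')$ is automatic; the content is the reverse inequality $d(u,u')\leq K\cdot d(\ell(u),\ell(u'))$ on a neighborhood of $0$ in $C$, for $\ell$ ranging over a Zariski-open set of linear projections $\C^N\to\C^2$. My plan is to adapt the proof of \eqref{it3}$\Rightarrow$\eqref{it2} of Theorem \ref{th:main} essentially verbatim, with the space curve $\gamma$ playing the role of $\gamma_1$ and its image $\ell\circ\gamma$ playing the role of $\gamma_2$. Choose coordinates $(x,y_2,\ldots,y_N)$ on $\C^N$ so that the projection $p\colon(x,y_2,\ldots,y_N)\mapsto x$ is generic for $C$ in the sense of Proposition \ref{prop:inner}, and, after an affine change of $\C^2$, so that $\ell(x,y_2,\ldots,y_N)=(x,\sum_{j\geq 2}c_j y_j)$. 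Parametrize each branch of $C$ by a Puiseux series $\gamma(w)=(w^n,f_2(w),\ldots,f_N(w))$ with $f_j(w)=\sum_{i\geq n}a_{ij}w^i$, so that $\ell\circ\gamma(w)=(w^n,\sum_i b_i w^i)$ with $b_i=\sum_j c_j a_{ij}$.

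For pairs $w,w'$ with $w^n=(w')^n$ and $w\neq w'$, expand along the radial arcs $s\mapsto\gamma(sw),\gamma(sw')$: as $s\to 0$, the distances $d(\gamma(sw),\gamma(sw'))$ and $d(\ell\gamma(sw),\ell\gamma(sw'))$ are asymptotic to $s^{i_0}|w^{i_0}-(w')^{i_0}|$ times $\|a_{i_0}\|$ and $|b_{i_0}|$ respectively, where $i_0$ is the smallest Puiseux exponent such that the vector $a_{i_0}=(a_{i_0,2},\ldots,a_{i_0,N})$ is nonzero and $w^{i_0}\neq(w')^{i_0}$, and $\|\cdot\|$ denotes the Euclidean norm. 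Such an $i_0$ is either $n$ or an essential exponent (characteristic or coincidence) of the space curve, so only finitely many values occur. For $(c_j)$ in a Zariski-open set, each of the finitely many linear forms $\sum_j c_j a_{i_0,j}$ is nonzero, so $b_{i_0}\neq 0$ and the ratio $\|a_{i_0}\|/|b_{i_0}|$ is finite and uniformly bounded over $|w|=|w'|=1$ with $w^n=(w')^n$. In particular the essential integer exponents of the plane curve $\ell(C)$ coincide with those of $C$, and exactly as in the plane case this yields a bilipschitz bound between $\gamma$ and $\ell\circ\gamma$ restricted to common fibers of $p$.

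Reduction of the general case uses Lemma \ref{le:curve} applied to $p|_{\ell(C)}$, which is legitimate since $\ell(C)\subset\C^2$ is a plane curve and a generic $\ell$ arranges that $\ker p$ meets no tangent line of $\ell(C)$. For $u,u'\in C$ near $0$, lift the segment $[p(u),p(u')]$ (detoured around $0$ if needed) to an arc $\tilde\alpha$ in $C$ from $u$ to some $u''$ with $p(u'')=p(u')$; its image $\ell(\tilde\alpha)$ is the corresponding lift in $\ell(C)$. Proposition \ref{prop:inner} implies that $L(\tilde\alpha)$ and $L(\ell\tilde\alpha)$ are comparable, and combining the fiber-wise bound on $d(u'',u')$ with Lemma \ref{le:curve} gives $d(u,u')\leq K\cdot d(\ell(u),\ell(u'))$. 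Different branches of $C$ with distinct tangent lines are separated at depth $\epsilon$ by outer distance of order $\epsilon$, a feature preserved by any $\ell$ whose kernel is transverse to the tangent cone of $C$. The main obstacle throughout is to verify that the Zariski-open conditions required---transversality of $\ker\ell$ to the tangent cone of $C$, nonvanishing of $b_{i_0}=\sum_j c_j a_{i_0,j}$ for each of the finitely many essential exponents $i_0$ on each branch, and distinct tangent lines of distinct branches remaining distinct after projection---are jointly satisfied on a nonempty Zariski-open subset of linear projections $\C^N\to\C^2$; each individual condition is the nonvanishing of a polynomial in the parameters $(c_j)$, so their conjunction is also generic.
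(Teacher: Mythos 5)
Your proposal is correct and follows essentially the same route as the paper: reduce to the fiber-wise estimate for $w^n=(w')^n$ via the radial-arc asymptotics, impose the Zariski-open genericity condition that the linear forms $\sum_j c_j a_{i_0,j}$ attached to the finitely many essential (characteristic and coincidence) exponents are nonzero, and conclude with Lemma \ref{le:curve} and the inner-bilipschitz property of the generic projection to $\C$. The only slight imprecision is that for a coincidence exponent between two branches the relevant linear form is $\sum_j c_j\bigl(a_{i_0,j}-\lambda^{i_0}a'_{i_0,j}\bigr)$ for each $n$-th root of unity $\lambda$, as in the paper's reducible case, rather than a form attached to a single branch.
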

Our notion of \emph{generic linear projection} to $\C^2$, defined in
the proof below, is equivalent to Teissier's, which says that the kernel of the
projection should contain no limit of secant lines to the curve.
\begin{proof}[Proof of Theorem \ref{th:proj}] We have to prove that
  the restriction $\ell |_C \colon C \to \ell(C) $ is bilipschitz for
  the outer metric. We choose coordinates $(x,y)$ in $\C^2$ so
  $\ell(C)$ is transverse to the $y$-axis at $0$  and coordinates
  $(z_1,\dots,z_n)$ in $\C^n$ with $z_1=x\circ\ell$. So $\ell$
has the form $(z_1,\dots,z_N)\mapsto (z_1, \sum_1^Nb_jz_j)$ and any 
component of
  $C$ has a Puiseux expansion of the form ($n$ is the multiplicity of
  the component):  
$$\gamma(w)=(w^n,\sum_{i\ge n}a_{2i}w^i,\dots,\sum_{i\ge
  n}a_{Ni}w^i)\,.$$ We first assume $(C,0)$ is irreducible. We again
denote $A:=\{i:\exists j, a_{ji}\ne 0\}$ and call an exponent $i\in A\setminus\{n\}$ an
\emph{essential integer exponent} if and only if
$$ \gcd\{j\in \{n\}\cup A: j\le i\}<\gcd\{j\in \{n\}\cup
A:j<i\}.$$
Define $a_{1n}=1$ and $a_{1i}=0$ for $i>n$. We say $\ell$ is 
\emph{generic} if  $\sum_{j=1}^Nb_ja_{ji}\ne 0$ for each
essential integer exponent $i$. We now assume $\ell$ is generic.

As in the proof of the second part of Theorem \ref{th:main} there
then exists $K>0$ and a neighborhood $U$ of $0$ in $\C$ such that for each
pair $(w,w')$ with $w\in U$ and $w^n=(w')^n$ we have 
$$
\frac1K d\bigl(\ell\gamma(w),\ell\gamma(w')\bigr)\le
d\bigl(\gamma(w),\gamma(w')\bigr)\le
Kd\bigl(\ell\gamma(w),\ell\gamma(w')\bigr)\,.$$ Lemma \ref{le:curve}
then completes the proof, as before.

The proof when $C$ is reducible is essentially the same, but the
genericity condition must take both characteristic and coincidence
exponents into consideration. Namely, $\ell$ should be generic
as above for each individual branch of $C$; and for any two branches, given by
(with $n$ now the $\lcm$ of their multiplicities)
$$\gamma(w)=(w^n,\sum_{i\ge n}a_{2i}w^i,\dots,\sum_{i\ge
  n}a_{Ni}w^i),\quad\gamma'(w)=(w^n,\sum_{i\ge
  n}a'_{2i}w^i,\dots,\sum_{i\ge n}a'_{Ni}w^i),$$ we require 
$\sum_{j=1}^Nb_j(a_{ji}-\lambda^ia'_{ji})\ne 0$ for each $n$-th root of
unity $\lambda$, where $i$ is the smallest exponent for which some
$a_{ji}-a'_{ji}$ is non-zero.
\end{proof}

\begin{corollary}\label{cor:main} Let $(C_1,0)\subset (\C^{N_1},0)$
  and $(C_{2},0)\subset (\C^{N_2},0)$ be two germs of complex
  curves. The following are equivalent:
\begin{enumerate}
\item $(C_1,0)$ and $(C_2,0)$ have same Lipschitz geometry {\it i.e.},
  there is a homeomorphism of germs $\phi\colon (C_1,0)\to (C_2,0)$
  which is bilipschitz for the outer metric;
\item there is a homeomorphism of germs $\phi\colon (C_1,0)\to
  (C_2,0)$, holomorphic except at $0$, which is bilipschitz for the
  outer metric;
\item the generic plane projections of $(C_1,0)$ and $(C_2,0)$ have
  the same embedded topology. \qed
\end{enumerate}
\end{corollary}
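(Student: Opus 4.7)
The plan is to reduce the corollary to the plane-curve case of Theorem \ref{th:main} by using Theorem \ref{th:proj} as a bridge. The implication (2) $\Rightarrow$ (1) is immediate, so only (1) $\Rightarrow$ (3) and (3) $\Rightarrow$ (2) need work, and both are essentially "sandwich" arguments.

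For (1) $\Rightarrow$ (3), I start with an outer bilipschitz homeomorphism $\phi\colon (C_1,0)\to (C_2,0)$ and pick, for each $i$, a generic linear projection $\ell_i\colon (\C^{N_i},0)\to (\C^2,0)$. By Theorem \ref{th:proj}, each $\ell_i|_{C_i}\colon C_i\to \ell_i(C_i)$ is outer bilipschitz; moreover the Teissier genericity condition (no limit of secant lines to $C_i$ lies in the kernel) forces $\ell_i|_{C_i}$ to be injective on a sufficiently small neighborhood of $0$, hence a homeomorphism onto its image. The composition $(\ell_2|_{C_2})\circ \phi\circ (\ell_1|_{C_1})^{-1}\colon \ell_1(C_1)\to \ell_2(C_2)$ is then an outer bilipschitz homeomorphism of plane curve germs, so the implication \eqref{it1} $\Rightarrow$ \eqref{it3} of Theorem \ref{th:main} identifies their embedded topologies.

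For (3) $\Rightarrow$ (2), the implication \eqref{it3} $\Rightarrow$ \eqref{it2} of Theorem \ref{th:main} produces a homeomorphism $\psi\colon \ell_1(C_1)\to \ell_2(C_2)$ that is holomorphic away from $0$ and outer bilipschitz. The maps $\ell_i|_{C_i}$ are restrictions of linear maps, so they are holomorphic; they are bijections onto their images (by genericity) and outer bilipschitz (by Theorem \ref{th:proj}), and their inverses are holomorphic on $\ell_i(C_i)\setminus\{0\}$ since each $\ell_i|_{C_i}$ is a biholomorphism there. Consequently
\[
\Phi := (\ell_2|_{C_2})^{-1}\circ \psi\circ (\ell_1|_{C_1})\colon (C_1,0)\to (C_2,0)
\]
is a homeomorphism of germs, holomorphic away from $0$, and outer bilipschitz, as required.

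The only real point to check—and what I expect to be the main technical subtlety rather than a genuine obstacle—is that Teissier's genericity for $\ell_i$ (no limit of secant lines in $\ker \ell_i$) gives not merely the bilipschitz conclusion of Theorem \ref{th:proj} but also injectivity of $\ell_i|_{C_i}$ near $0$. This is needed so that $(\ell_i|_{C_i})^{-1}$ makes sense as a map of germs; without it, the sandwich argument collapses. Everything else is a formal composition of bilipschitz and holomorphic data.
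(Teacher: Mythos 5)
Your argument is correct and is exactly the reduction the paper intends: the corollary is stated without a separate proof precisely because it follows by composing the bilipschitz homeomorphisms of Theorem \ref{th:proj} with the equivalences of Theorem \ref{th:main} for the plane projections, just as you do. The one ``subtlety'' you flag --- injectivity of $\ell_i|_{C_i}$ near the origin --- is not an extra hypothesis to verify but is automatic from Theorem \ref{th:proj} itself, since the lower bilipschitz bound $d\bigl(\ell_i(p),\ell_i(q)\bigr)\ge \frac1K d(p,q)$ already forces $\ell_i|_{C_i}$ to be injective on a small enough neighborhood of $0$.
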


\section{Ambient geometry of plane curves}\label{sec:ambient}

To complete the proof of Theorem \ref{th:main} we must show the
implication \eqref{it3} $\Rightarrow$ \eqref{it4} of that theorem,
since \eqref{it4} $\Rightarrow$ \eqref{it3} is trivial.
We will use a \emph{carrousel decomposition} of $(\C^2,0)$ with respect
to a plane curve, so we first describe this (it is essentially the one
described in \cite{BNP}).

The tangent space to $C$ at $0$ is a union $\bigcup_{j=1}^mL^{(j)}$ of
lines. For each $j$ we denote the union of components of $C$ which are
tangent to $L^{(j)}$ by $C^{(j)}$.  We can assume our coordinates
$(x,y)$ in $\C^2$ are chosen so that no $L^{(j)}$ is tangent to an
axis. Then $L^{(j)}$ is given by an equation $y=a_1^{(j)}x$ with
$a_1^{(j)}\ne 0$.

We choose $\epsilon_0>0$ sufficiently small that the set $\{(x,y):
|x|=\epsilon\}$ is transverse to $C$ for all $\epsilon\le \epsilon_0$.
We define conical sets $V^{(j)}$ of the form
$$V^{(j)}:=\{(x,y):|y-a_1^{(j)}x|\le \eta |x|, |x|\le
\epsilon_0\}\subset \C^2\,,$$ where the equation of the line $L^{(j)}$
is $y=a_1^{(j)}x$ and $\eta>0$ is small enough that the cones are
disjoint except at $0$. If $\epsilon_0$ is small enough
$C^{(j)}\cap \{|x|\le\epsilon_0\}$ will lie completely in
$V^{(j)}$.  

There is then an $R>0$ such that for any $\epsilon\le \epsilon_0$ the
sets $V^{(j)}$ meet the boundary of the ``square ball''
$$B_\epsilon:=\{(x,y)\in \C^2: |x|\le \epsilon, |y|\le R\epsilon\}$$
only in the part $|x|=\epsilon$ of the boundary. We will use these
balls as a system of Milnor balls.

We now describe our carrousel decomposition for each $V^{(j)}$,  so we
will fix $j$ for the moment.

We first truncate the Puiseux series for each component of $C^{(j)}$
at a point where truncation does not affect the topology of
$C^{(j)}$. Then for each pair $\kappa=(f, p_k)$ consisting of a
Puiseux polynomial $f=\sum_{i=1}^{k-1}a^{(j)}_ix^{p^{(j)}_i}$ and an exponent
$p^{(j)}_k$ for which there is a Puiseux series
$y=\sum_{i=1}^{k}a^{(j)}_ix^{p^{(j)}_i}+\dots$ describing some component of
$C^{(j)}$, we consider all components of $C^{(j)}$ which fit this
data. If $a^{(j)}_{k1},\dots,a^{(j)}_{k{m_\kappa}}$ are the coefficients of
$x^{p^{(j)}_k}$ which occur in these Puiseux polynomials we define
\begin{align*}
  B_\kappa:=\Bigl\{(x,y):~&\alpha_\kappa|x^{p^{(j)}_k}|\le
  |y-\sum_{i=1}^{k-1}a^{(j)}_ix^{p^{(j)}_i}|\le
  \beta_\kappa|x^{p^{(j)}_k}|\\
  & |y-(\sum_{i=1}^{k-1}a^{(j)}_ix^{p^{(j)}_i}+a^{(j)}_{kj}x^{p^{(j)}_k})|\ge
  \gamma_\kappa|x^{p^{(j)}_k}|\text{ for }j=1,\dots,{m_\kappa}\Bigr\}\,.
\end{align*}
Here $\alpha_\kappa,\beta_\kappa,\gamma_\kappa$ are chosen so that
$\alpha_\kappa<|a^{(j)}_{k\nu}|-\gamma_\kappa<
|a^{(j)}_{k\nu}|+\gamma_\kappa<\beta_\kappa$ for each
$\nu=1,\dots,{m_\kappa}$.  If $\epsilon$ is small enough, the sets $
B_\kappa$ will be disjoint for different $\kappa$.

The intersection $B_{\kappa}\cap \{x=t\}$ is a finite collection of
disks with smaller disks removed. We call
$B_{\kappa}$ a  \emph{$B$-piece}.
The closure of the complement in $V^{(j)}$ of the union of the
$B_\kappa$'s is a union of pieces, each of which has link either a
solid torus or a ``toral annulus'' (annulus $\times~ \Bbb S^1$). We
call the latter \emph{annular pieces} or \emph{$A$-pieces} and the
ones with solid torus link \emph{$D$-pieces} (a $B$-piece
corresponding to an inessential exponent has the same topology as an
$A$-piece, but we do not call it annular).

This is our carrousel decomposition of $V=V^{(j)}$.  We call
$\overline{B_\epsilon \setminus \bigcup V^{(j)}}$ a $B(1)$ piece (even
though it may have $A$- or $D$-topology). It is metrically conical,
and together with the carrousel decompositions of the $V^{(j)}$'s we
get a carrousel decomposition of the whole of $B_\epsilon$.
     
\begin{proof}[Proof of \eqref{it3} $\Rightarrow$ \eqref{it4} of
  Theorem \ref{th:main}]
  Let $(C_1,0) \subset (\C^2,0)$ and $(C_2,0) \subset(\C^2,0)$ have
  the same embedded topological type. Consider two carrousel
  decompositions of $ (\C^2,0)$: one with respect to $C_1$ and the
  other with respect to $C_2$, constructed as above. The proof
  consists of constructing a bilipschitz map of germs $h \colon
  (\C^2,0) \to (\C^2,0)$ which sends the carrousel decomposition for
  $C_1$ to the one for $C_2$ (being careful to include matching pieces
  for inessential exponents which occur in just one of $C_1$ and
  $C_2$).  We first construct it to respect the carrousels, but
  not necessarily map $C_1$ to $C_2$. Once this is done, we 
  adjust it so that $C_1$ is mapped to $C_2$.

  Let $L^{(j)}_1$ and $L^{(j)}_2$, $j=1,\dots,m$, be the tangent lines
  to $C_1$ and $C_2$ and $C^{(j)}_1$ resp.\ $C^{(j)}_2$ the union of
  components of $C_1$ resp.\ $C_2$ which are tangent to $L^{(j)}_1$
  resp.\ $L^{(j)}_2$. We may assume we have numbered them so
  $C^{(j)}_1$ and $C^{(j)}_2$ have matching embedded topology. Let
  $V^{(j)}_1$ and $V^{(j)}_2$, $j=1,\dots,m$, be the conical sets
  around the tangent lines as defined earlier.

  The $B(1)$ pieces of the carrousel decompositions for $C_1$ and
  $C_2$ are metrically conical with the same topology, so there is a
  conical bilipschitz diffeomorphism between them. We can arrange that
  it is a translation on each $x=t$ section of each $\partial
  V_1^{(j)}$.  We will extend it over the cones $V_1^{(j)}$
  and $V_2^{(j)}$ using the carrousels.

  Consider the Puiseux series
  $y=\sum_{i=1}^{k}a^{(j)}_ix^{p^{(j)}_i}+\dots$ describing some
  component of $C_1^{(j)}$ and the Puiseux series
  $y=\sum_{i=1}^{k}b^{(j)}_ix^{p^{(j)}_i}+\dots$ describing the
  corresponding component of $C_2^{(j)}$.  If a term with inessential
  exponent appears in one of the series, we include it also in the
  other, even if its coefficient there is zero. This way, when we
  construct the carrousel as above we have corresponding
  $B$-pieces for the two carrousels. Moreover, we can choose the
  constants $\alpha_\kappa,\beta_\kappa,\gamma_\kappa$ used to 
  construct these corresponding $B$-pieces to be the same for
  both. The $\{x=t\}$ sections of a pair of corresponding $A$-pieces
  will then be congruent, so we can map the one $A$-piece to the other
  by preserving $x$ coordinate and using translation on each $x=t$
  section. The same holds for $D$-pieces. It then remains to extend to
  the $B$-pieces. 

  A $B$-piece $B_{\kappa_1}$ in the decomposition for $C_1$ is
  determined by some $\kappa_1=(f_1, p_k)$ with
  $f_1=\sum_{i=1}^{k-1}a_ix^{p_i}$, and is foliated by curves of
  the form $y=f_1+\xi x^{p_k}$ for varying $\xi$. The corresponding
  piece $B_{\kappa_2}$ for $C_2$ is similarly determined by some
  $\kappa_2=(f_2, p_k)$ with $f_2=\sum_{i=1}^{k-1}b_ix^{p_i}$ and is
  foliated by curves $y=f_2+\xi x^{p_k}$. The $x=\epsilon_0$ section
  of $B_{\kappa_1}$ has a free cyclic group action generated by the
  first return map of the foliation, and the same is true for
  $B_{\kappa_2}$. We choose a smooth map
  $(B_{\kappa_1}\cap\{x=\epsilon_0\})\to(B_{\kappa_2}\cap\{x=\epsilon_0\})$
  which is equivariant for this action and on the boundary matches the
  maps, coming from $A$- and $D$-pieces, already chosen. This map
  extends to the whole of $B_{\kappa_1}$ by requiring it to preserve the
  foliation and $x$-coordinate. 

  It is not hard to verify that the resulting map of germs
  $\phi\colon(\C^2,0)\to (\C^2,0)$ is bilipschitz. However, it maps
  $C_1$ not to $C_2$, but to a small deformation of it, since we
  constructed the carrousels by first truncating our Puiseux series
  beyond any terms which contributed to the topology. But again it is
  not hard to see that, by a small change of the constructed map
  inside the $D$-pieces which intersect $C_1$, one can change $\phi$
  so it maps $C_1$ to $C_2$ while changing the bilipschitz coefficient
  by an amount which approaches zero as one approaches the origin.
 \end{proof}

\end{document}